\title{On an open problem of characterizing the birationality of 4K}
\author{Meng Chen and Yong Hu}
\address{School of Mathematical Sciences \& Shanghai Centre for Mathematical Sciences, Fudan University, Shanghai 200433, China}
\email{mchen@fudan.edu.cn}
\address{School of Mathematics, Korea Institute for Advanced Study, Seoul 02455, South Korea}
\email{yonghu11@kias.re.kr}
\thanks{The first author was supported by National Natural Science Foundation of China (\#11571076, \#11731004, \#11421061) and Program of Shanghai Subject Chief Scientist (\#16XD1400400)}
\newcommand{\bQ}{{\mathbb Q}}
\newcommand{\bP}{{\mathbb P}}
\newcommand{\rounddown}[1]{\lfloor{#1}\rfloor}
\newcommand\lrw{\longrightarrow}
\newcommand\rw{\rightarrow}
\newcommand\bZ{{\mathbb{Z}}}
\newcommand{\lsgeq}{\succcurlyeq}
\newcommand{\lsleq}{\preccurlyeq}
\newcommand{\simQ}{\sim_{\mathbb{Q}}}
\newcommand{\Mov}{\textrm{Mov}}
\newcommand{\Bs}{\textrm{Bs}}
\newtheorem{thm}{Theorem}[section]
\newtheorem{lem}[thm]{Lemma}
\theoremstyle{definition}
\newtheorem{exmp}[thm]{Example}
\theoremstyle{remark}
\begin{document}
\begin{abstract} We answer an open problem raised by Chen-Zhang in 2008 and prove that, for any minimal projective 3-fold $X$ of general type with the geometric genus $\geq 5$, the $4$-canonical map $\varphi_{4,X}$ is non-birational if and only if $X$ is birationally fibred by a pencil of $(1,2)$-surfaces (i.e. $c_1^2=1$, $p_g=2$).  The statement does not hold for those with the geometric genus $\leq 4$ according to our examples.
\end{abstract}
\maketitle
\pagestyle{myheadings}
\markboth{\hfill M. Chen and Y. Hu\hfill}{\hfill On an open problem of characterizing the birationality of 4K\hfill}
\numberwithin{equation}{section}

\section{\bf Introduction}

Throughout we work over an algebraically closed field of characteristic $0$.

In this note, a $(1,2)$-surface means a nonsingular projective surface of general type whose minimal model has the invariants: $c_1^2=1$ and $p_g=2$.

A famous theorem of Bombieri says that, for any nonsingular projective surface $S$ of general type, $\varphi_{4,S}$ is non-birational if and only if $S$ is a $(1,2)$-surface.
A direct corollary is that any nonsingular projective 3-fold of general type, admitting a pencil of $(1,2)$-surfaces, necessarily has non-birational $4$-canonical map. A very natural question (raised in Chen-Zhang \cite[6.4(1)]{CZ08}) is whether the converse is true!

A projective 3-fold $Z$ is said to be {\it (birationally) fibred by a pencil of $(1,2)$-surfaces} if $Z$ is birationally equivalent to a nonsingular projective 3-fold $Y$ which admits a fibration  $Y\rw T$ onto a smooth complete curve $T$ where the general fiber is a $(1,2)$-surface.

The purpose of this paper is to prove the following theorem which answers the above  question:

\begin{thm}\label{main}
Let $X$ be a minimal projective $3$-fold of general type with $p_g(X)\ge 5$. Then $\varphi_{4,X}$ is non-birational if and only if $X$ is birationally fibred by a pencil of $(1,2)$-surfaces.
\end{thm}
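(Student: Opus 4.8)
The forward (``if'') implication is the direct corollary of Bombieri's theorem already noted in the excerpt. If $X$ is birationally fibred by $(1,2)$-surfaces, take a nonsingular model $Y\rw T$ whose general fiber $F$ is a $(1,2)$-surface; since $F|_F\equiv 0$, adjunction gives $K_F\equiv K_Y|_F$, so $|4K_Y|\,\big|_F$ is a subsystem of $|4K_F|$. By Bombieri $\varphi_{4,F}$ fails to separate two general points of $F$, hence so does $\varphi_{4,Y}$, and $\varphi_{4,X}$ is non-birational. All the content is in the converse, which I would attack by the standard ``restrict to a general surface, then to a general curve'' strategy.

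For the converse, fix a birational morphism $\pi\colon X'\rw X$ from a nonsingular projective $3$-fold on which the movable part $|M|$ of $|K_{X'}|$ is base-point free, and write $K_{X'}=\pi^*K_X+E$ with $E\geq 0$ and $\pi$-exceptional. The plan is to analyse $\varphi_{1,X}$ through $|M|$ and split into two regimes. If $|M|$ is \emph{not} composed with a pencil (so $\dim\varphi_{1,X}(X)\geq 2$), then a general member $S\in|M|$ is an irreducible smooth surface of general type and $M|_S$ is nef and big; I claim $\varphi_{4,X}$ is then birational. If $|M|$ \emph{is} composed with a pencil, then $\varphi_{1,X}$ factors through a fibration $f\colon X'\rw\Gamma$ onto a smooth curve of genus $b$ whose general fiber $S$ is a smooth surface of general type, and $M\equiv aS$ with $a\geq p_g(X)-1\geq 4$. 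This is the only regime that can produce the exception, and it is here that the hypothesis $p_g\geq 5$ enters quantitatively.

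In both regimes I would use the criterion that $\varphi_{4,X}$ is birational once (i) $|4K_{X'}|$ separates two distinct general members $S$, and (ii) the restriction $|4K_{X'}|\,\big|_S$ is birational on a general $S$. Since $3K_{X'}-S$ is nef and big (a routine computation from $M\equiv aS$, $a\geq 4$, and the bigness of $\pi^*K_X$), Kawamata--Viehweg vanishing makes $H^0(X',4K_{X'})\rw H^0(S,(4K_{X'})|_S)$ surjective, and by adjunction $(4K_{X'})|_S\equiv K_S+(3K_{X'}-S)|_S$ with $(3K_{X'}-S)|_S$ nef and big on $S$. For (ii) I descend once more to a general curve $C$ in the movable part of the appropriate system on $S$, reducing birationality on $S$ to a Riemann--Roch/Clifford comparison of $\deg\big((4K_{X'})|_C\big)$ with $2g(C)+1$, governed by the single invariant $\xi=\pi^*K_X\cdot C$. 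When $|M|$ is not composed with a pencil, the extra bigness of $M|_S$ makes $(3K_{X'}-S)|_S$ strictly larger than $3\sigma^*K_{S_0}$ (with $\sigma\colon S\rw S_0$ the contraction to the minimal model), which suffices to exceed every exceptional case of Bombieri's theorem; so (ii) holds and $\varphi_{4,X}$ is birational. Thus no exception arises unless $|M|$ is composed with a pencil.

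In the pencil case, separating the members $S$ in (i) is automatic, since $|M|$ separates distinct general fibers of $f$ and, up to adding a fixed divisor in $|3K_{X'}|$, embeds as a subsystem of $|4K_{X'}|$; so the only issue is (ii) on a general fiber $S$. Here the positivity bound $(\pi^*K_X)|_S\geq \tfrac{a}{a+1}\sigma^*K_{S_0}$ with $\tfrac{a}{a+1}\geq\tfrac45$ (valid precisely because $a\geq 4$) lets $(4K_{X'})|_S$ realize, after the vanishing surjection, a linear subsystem on $S$ that dominates $\varphi_{4,S_0}$. By Bombieri's theorem this restricted system is birational \emph{unless} $S_0$ is a $(1,2)$-surface. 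Hence $\varphi_{4,X}$ non-birational forces $|M|$ composed with a pencil whose general fiber has minimal model a $(1,2)$-surface, i.e. $X$ is birationally fibred by a pencil of $(1,2)$-surfaces, as desired. I expect the main obstacle to be exactly this sharp numerical bookkeeping in the pencil case: one must control $a$ and $\xi$ finely enough that the restricted $4$-canonical system reaches the $4$-canonical positivity of $S_0$ and no lower, so that Bombieri's dichotomy applies cleanly. This is also where the threshold is sharp, since when $p_g\leq 4$ one only secures $a\leq 3$, the restricted system attains at most the $3$-canonical positivity of the fiber, and Bombieri's larger exceptional lists at lower pluricanonical level reappear -- which is the source of the authors' counterexamples.
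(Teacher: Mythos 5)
Your proposal has a genuine gap, and it sits exactly where the paper's new content lies. You split the converse into two regimes according to whether $|M|=\Mov|K_{X'}|$ is composed with a pencil, and you claim that when it is not (so $\varphi_{1,X}$ has image of dimension $\geq 2$) the map $\varphi_{4,X}$ is automatically birational. This is false, and your argument for it does not work: when the image $B$ of the Stein factorization is a surface, a general $S\in|M|$ is fibred by the curves $C$ (general fibers of $X'\rw B$), and by \cite[Theorem 4.3]{CZ08} (Theorem \ref{d=20} in the paper) $\varphi_{4,X}$ is non-birational precisely when $g(C)=2$ and $(\pi^*(K_X)\cdot C)=1$; in that situation $|4K_{X'}|$ restricted to $C$ has degree $4=\deg 2K_C$ and lands inside the non-birational system $|2K_C|$ on a hyperelliptic curve, so no amount of ``extra bigness of $M|_S$'' rescues birationality along $C$. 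Your vanishing-plus-adjunction scheme never sees this obstruction because it only tracks the invariant $\xi=(\pi^*(K_X)\cdot C)$ without confronting the case $\xi=1$, $g(C)=2$.

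Consequently the hard half of the theorem is missing entirely: in the $\dim(B)=2$ exceptional case one must still produce a pencil of $(1,2)$-surfaces, and it does \emph{not} come from $|K_X|$ (which there defines a two-dimensional family of curves, not a pencil of surfaces). The paper's construction is the real content: one shows there is a unique $\pi$-exceptional divisor $E$ meeting the general $C$ in one point (Lemma \ref{lem:horizontal}), that $E$ arises from blowing up a smooth curve and hence carries a ruling $l_E$ with $(S\cdot l_E)=1$ (Lemmas \ref{lem:horizontal2}, \ref{lem:key}), so that $E$ is a birational section of $X'\rw B$ and the ruling of $E$ descends to a pencil of curves on $B$; pulling that pencil back to the $3$-fold gives a pencil of surfaces $F''$ with $\tilde{\pi}^*(K_X)\geq 2F''$, and the bound $\tilde{\pi}^*(K_X)|_{F''}\geq \frac{2}{3}\sigma''^*(K_{F_0''})$ from \cite[Lemma 2.1]{CC3} forces $(\sigma''^*(K_{F_0''})\cdot C_F'')=1$, i.e. $F_0''$ is a $(1,2)$-surface. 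None of this is anticipated in your outline. Your treatment of the genuine pencil case ($\dim(B)=1$) and the ``if'' direction are essentially fine and match the known results the paper cites, but as written the proposal would prove a false statement in the surface-image regime and therefore cannot be repaired without introducing the section-and-ruling construction (or some substitute for it).
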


One has the following example:

\begin{exmp}\label{pg4} (see Fletcher \cite{Flet}) The general hypersurface of degree $10$:
$$X=X_{10}\subset \bP(1,1,1,1,5)$$
  is a smooth canonical 3-fold with $p_g=4$ and non-birational $4$-canonical map $\varphi_{4,X}$.  Since $X$ is a double cover onto $\bP^3$, $X$ admits no genus $2$ curve class of  canonical degree $1$. Hence $X$ admits no pencil of $(1,2)$-surfaces (by the same argument as in \cite[Example 6.3]{CZ08}).
\end{exmp}

Example \ref{pg4}, together with Example \ref{pg3} and Example \ref{pg1} in the last section, shows that the condition ``$p_g(X)\geq 5$'' in Theorem \ref{main} is sharp.
\smallskip

Throughout we use the following symbols:
\begin{itemize}
\item[$\diamond$] ``$\sim$'' denotes linear equivalence or ${\mathbb Q}$-linear equivalence (subject to the context);
\item[$\diamond$] ``$\equiv$'' denotes numerical equivalence;
\item[$\diamond$] ``$|D_1|\lsgeq |D_2|$'' (or, equivalently,  ``$|D_2|\lsleq |D_1|$'') means, for linear systems $|D_1|$ and $|D_2|$ of divisors on a variety,
$$|D_1|\supseteq|D_2|+\text{certain fixed effective divisor}.$$
\end{itemize}

\section{\bf Proof of Theorem \ref{main}}

Throughout this section, $X$ denotes a minimal projective $\bQ$FT 3-fold of general type with $p_g(X)\geq 5$. Let $K_X$ be a canonical divisor of $X$ and denote by $\mathrm{Sing}(X)$ the singular locus of $X$. Since $3$-dimensional terminal singularities are isolated, $\mathrm{Sing}(X)$ consists of only finitely many points.

\subsection{Fixed notation and the standard resolution for $\Mov|K_X|$}\label{setup}\

{}First of all, we take a resolution of singularities of $X$, say: $\alpha:X_0\lrw X$ where $X_0$ is projective. In particular, we may choose $\alpha$ such that $\alpha$ is an isomorphism over the smooth locus of $X$.  As $X$ is minimal, we have $p_g(X_0)=p_g(X)\geq 5$.
We may write $$\alpha^{*}(K_X)=M_0+Z_0',$$ where $|M_0|=\Mov|K_{X_0}|$ and $Z'_0$ is an effective $\bQ$-divisor.

By Hironaka's big theorem, we may resolve the base locus $\Bs|M_0|$ by taking
successive blowups, say:
\begin{align*}
 \beta \colon X'=X_{n+1}\stackrel{\pi_n}\rightarrow X_{n}\rightarrow \cdots \rightarrow X_{i+1} \stackrel{\pi_{i}}\rightarrow X_{i}\rightarrow \cdots \rightarrow X_1\stackrel{\pi_0}\rightarrow X_0
\end{align*}
where each $\pi_i$ is a blow-up along a nonsingular center $W_i$ ($W_i$ is contained in the base locus of the movable part $\Mov|(\pi_0\circ \pi_1 \circ \cdots \circ \pi_{i-1})^*(M_0)|$. Moreover, the morphism $\beta=\pi_n\circ \cdots \circ \pi_0$ satisfies the following properties:
\begin{enumerate}
\item The linear system $|M|=\Mov|\beta^*(M_0)|$ is base point free.
\item One may write
      \begin{eqnarray}\label{eq:blowupadjunction}
          K_{X'}&=&\beta^*(K_{X_0})+\sum_{i=0}^{n}a_i E_i,\label{eq:blowupadjunction}\\
          \beta^*(M_0)&=&M+\sum_{i=0}^{n}b_i E_i,
      \end{eqnarray}
where each $E_i$ is the strict transform of the exceptional divisor of $\pi_i$ for $0 \le i \le n$,
 $a_i$ and $b_i$ are positive integers.
\end{enumerate}

For any positive integer $m$, denote by $|M_m|$ the moving part of $|mK_{X'}|$.  By our notation, $M=M_1$.

 \begin{lem}\label{ab} (see \cite[Lemma 4.2]{Ch04}) In the above setting, the following properties hold:
\begin{itemize}
\item[(i)] For any $i$, $a_i\leq 2b_i$.

\item[(ii)] If $a_k=b_k=1$ for some $k$ with $0 \le k \le n$, then $W_k$ is a smooth curve contained in  $X_k$.

\item[(iii)]
 If $a_k=2b_k$ for some $k$ such that $0 \le k \le n$, then $W_k$ is a closed point of $X_k$.
\end{itemize}
\end{lem}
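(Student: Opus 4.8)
The plan is to reduce all three statements to a step-by-step analysis of the individual blowups $\pi_k\colon X_{k+1}\to X_k$, tracking how the two effective exceptional divisors
$$R_k := K_{X_{k+1}}-g_k^*K_{X_0}, \qquad F_k := g_k^*M_0 - M^{(k+1)}$$
evolve as $k$ grows, where $g_k=\pi_k\circ\cdots\circ\pi_0$ and $M^{(k+1)}=\Mov|g_k^*M_0|$. Since each $X_k$ is a smooth $3$-fold and each center $W_k$ lies in the base locus of a movable system, $W_k$ has no divisorial component, so its codimension $c_k$ is either $2$ (a smooth curve) or $3$ (a point), and the blow-up discrepancy is $c_k-1\in\{1,2\}$. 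Writing everything in terms of the strict transforms $E_j^{(k)}$ of the exceptional divisors, I would first record that the coefficient of a strict transform is unchanged by later blowups, so the final coefficients $a_k,b_k$ are already determined at the moment $\hat E_k$ is born.

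The heart of the argument is the pair of recursions
$$a_k=(c_k-1)+\sum_{j<k}a_j\,m_{jk}, \qquad b_k=\mu_k+\sum_{j<k}b_j\,m_{jk},$$
where $\mu_k=\mathrm{mult}_{W_k}M^{(k)}\ge 1$ (because $W_k\subseteq\Bs|M^{(k)}|$) and $m_{jk}=\mathrm{mult}_{W_k}E_j^{(k)}\ge 0$. The point I would emphasize is that the \emph{same} multiplicities $m_{jk}$ appear in both recursions: they come from pulling back the common divisor $E_j^{(k)}$ via $\pi_k^*E_j^{(k)}=E_j^{(k+1)}+m_{jk}\hat E_k$, and this single geometric input feeds both the canonical formula $K_{X_{k+1}}=\pi_k^*K_{X_k}+(c_k-1)\hat E_k$ and the movable-part identity $\pi_k^*M^{(k)}=M^{(k+1)}+\mu_k\hat E_k$.

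With the recursions in hand, (i), (ii) and (iii) follow by induction on $k$. For (i) I would assume $a_j\le 2b_j$ for all $j<k$ and compute $a_k-(c_k-1)=\sum_{j<k}a_jm_{jk}\le 2\sum_{j<k}b_jm_{jk}=2(b_k-\mu_k)$; since $c_k-1\le 2\le 2\mu_k$ this yields $a_k\le 2b_k$. For (ii), $a_k=1$ forces $c_k-1=1$ and $\sum_{j<k}a_jm_{jk}=0$, hence $c_k=2$, and as $W_k$ is a nonsingular center this makes it a smooth curve in $X_k$. For (iii), equality $a_k=2b_k$ in the chain of inequalities above forces $2\mu_k\le c_k-1\le 2$, so $\mu_k=1$ and $c_k=3$, i.e. $W_k$ is a point.

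The step I expect to require the most care is establishing the two recursions cleanly — in particular justifying that the coefficient of $\hat E_k$ in the fixed part of $\pi_k^*M^{(k)}$ is exactly $\mu_k$, with no hidden extra contribution along the new exceptional divisor, and that strict-transform coefficients are genuinely stable under subsequent blowups. Once this bookkeeping is pinned down, the two elementary numerical inputs $c_k-1\le 2$ and $\mu_k\ge 1$ do all the work, and it is reassuring that these are precisely the facts that force the inequality $a_i\le 2b_i$ to become an equality exactly when $W_i$ is a point with $\mu_i=1$.
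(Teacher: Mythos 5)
Your proposal is correct and follows essentially the same route as the proof the paper points to (the lemma is only cited here, from \cite[Lemma 4.2]{Ch04}): an induction along the tower of blow-ups using the recursions $a_k=(c_k-1)+\sum_{j<k}a_jm_{jk}$ and $b_k=\mu_k+\sum_{j<k}b_jm_{jk}$, together with the two numerical facts $c_k-1\le 2$ and $\mu_k\ge 1$. The bookkeeping you flag as delicate does go through: the fixed part of $|\pi_k^*M^{(k)}|$ is exactly $\mu_k\hat E_k$ because $\pi_k$ is an isomorphism off $\hat E_k$ and $|M^{(k)}|$ has no fixed component, and the strict-transform coefficients are stable under later blow-ups by the pullback formula $\pi_k^*E_j^{(k)}=E_j^{(k+1)}+m_{jk}\hat E_k$.
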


Let {$\pi=\alpha\circ\beta: X'\lrw X$} be the composition. We may write
\begin{align}\label{eq:adjunction}
K_{X'}\simQ \pi^*(K_X)+E_{\pi}, \ \ \ \ \pi^*(K_X)\simQ M+E_{\pi}',
\end{align}
where $E_{\pi}$ is an effective $\pi$-exceptional $\bQ$-divisor and $E_{\pi}'$ is an effective $\bQ$-divisor.  Let $g=\varphi_{1,X}\circ \pi$ and set $\Sigma=g(X')$.
Take the Stein factorization of $g$, say
$X^{'}\stackrel{f}\rightarrow B \stackrel{s} \rightarrow \Sigma$.
We have the following commutative diagram:
 \begin{align*}
\xymatrix{
 & X^{'} \ar[d]_{\pi} \ar[r]^{f} \ar[dr]^{g} \ar[dl]_{\beta}
                & B \ar[d]^{s} \\
X_0 \ar[r]^{\alpha} & X  \ar@{-->}[r]_{\varphi_{1,X}}
                & \Sigma          }
\end{align*}
where $B$ is a normal projective variety.

\subsection{The case of $\dim (B)=1$ and $3$}\

This is a known case since we have the following theorem:

\begin{thm}\label{d=1} (see Chen--Zhang \cite[4.2, 4.8, 4.9]{CZ08}) Let $X$ be a minimal $3$-fold of general type with $p_g(X)\geq 5$. Keep the notation in \ref{setup}.  The following statements hold:
\begin{itemize}
\item[(i)]  Assume $\dim (B)=1$. Then $\varphi_{4,X}$ is non-birational if and only if the general fiber of $f$ is a $(1,2)$-surface.
\item[(ii)] Assume $\dim(B)=3$. Then $\varphi_{4,X}$ is birational onto its image.
\end{itemize}
\end{thm}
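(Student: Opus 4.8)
The plan is to transfer the birationality question on $X'$ to a question on a surface, where Bombieri's theorem and Reider's theorem are available. For $\dim(B)=1$ the morphism $f\colon X'\to B$ is a fibration whose general fibre $S$ is a smooth projective surface of general type, and since the normal bundle of a fibre is trivial, adjunction gives $K_{X'}|_S=K_S$, whence $4K_{X'}|_S=4K_S$. The separation of two general points $x_1,x_2\in S$ by $\varphi_{4,X}$ is controlled by the image of the restriction map $H^0(X',4K_{X'})\to H^0(S,4K_S)$. If $S$ is a $(1,2)$-surface, then by Bombieri's theorem $|4K_S|$ (equivalently $|4K_{S_0}|$ on the minimal model $S_0$, which defines a birationally equivalent map) already fails to separate two general points of $S$; hence no subsystem can, and $\varphi_{4,X}$ is non-birational. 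This settles the ``if'' direction of (i) immediately.

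For the converse in (i) I would show that if $S$ is \emph{not} a $(1,2)$-surface then $\varphi_{4,X}$ is birational. Since $p_g(X)\ge5$, the movable part $M=\Mov|K_{X'}|$ is composed with $f$ and one computes $M\equiv aS$ with $a\ge p_g-1\ge4$; the subsystem of $|4K_{X'}|$ pulled back from a suitable complete system on $B$ then separates two general points lying on distinct fibres, which is routine. It remains to separate two general points on a single general fibre $S$, and for this the key is to lift the \emph{full} system $|4K_S|$ from $S$ to $X'$. Using $S\equiv\tfrac1a M$ together with the nef and big divisor $\pi^*(K_X)$, I would write $4K_{X'}-S$ in the form $K_{X'}+\lceil D\rceil$ with $D$ a nef and big $\bQ$-divisor and apply Kawamata--Viehweg vanishing to obtain $H^1(X',4K_{X'}-S)=0$, so that the restriction map is surjective and the image system is all of $|4K_S|$. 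As $S$ is not a $(1,2)$-surface, Bombieri's theorem makes $|4K_S|$ birational; combined with the separation of distinct fibres this yields birationality of $\varphi_{4,X}$.

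For $\dim(B)=3$ the system $|M|$ is generically finite onto its image, so $\pi^*(K_X)$ is nef and big with $(\pi^*K_X)^3\ge M^3>0$, and because $\Sigma\subset\bP^{p_g-1}$ is a nondegenerate $3$-fold with $p_g\ge5$ the self-intersection $(\pi^*K_X|_S)^2$ on a general member $S\in|M|$ is comfortably large. I would run the same restriction strategy one dimension lower: restrict to a general (irreducible, by Bertini) surface $S\in|M|$, lift sections by Kawamata--Viehweg vanishing, and then either apply Reider's theorem directly on $S$ to the nef and big divisor $3\pi^*(K_X)|_S$, or descend once more to a general curve in the restricted movable system and separate points there. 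Since the canonical image is already three-dimensional, the numerical hypotheses of Reider's theorem are met with room to spare, and $\varphi_{4,X}$ is birational.

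The main obstacle in both cases is the positivity bookkeeping required to invoke vanishing and Reider: one must present the relevant divisors as $K_{X'}$ plus a nef and big $\bQ$-divisor while absorbing the exceptional divisors $E_i$, whose negativity obstructs naive nefness, into the fractional (round-up) parts. In Case~(i) the delicate point is the explicit lower bound for $(\pi^*K_X|_S)^2$ and the intersection numbers $\pi^*(K_X)\cdot C$ that feed Reider's theorem, together with the verification that the \emph{only} configuration in which these estimates can fail on the general fibre is precisely the $(1,2)$-surface; this is exactly what pins down the sharp dichotomy. By contrast, the adjunction identity $K_{X'}|_S=K_S$ and the separation of distinct fibres are routine.
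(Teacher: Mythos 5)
The paper does not prove this statement at all: Theorem~\ref{d=1} is quoted verbatim from Chen--Zhang \cite[4.2, 4.8, 4.9]{CZ08}, so there is no internal proof to compare against. Your outline does follow the general strategy of that cited work (restrict to a fibre or to a general member of $|M|$, lift sections by vanishing, conclude by Bombieri/Reider), and the ``if'' direction of (i) is correct as you state it. But as a proof the proposal has concrete gaps. First, the vanishing $H^1(X',4K_{X'}-S)=0$ does not follow from Kawamata--Viehweg: you would need $3K_{X'}-S$ to be (klt-adjusted) nef and big, but $K_{X'}=\pi^*(K_X)+E_\pi$ where the exceptional divisors occur with \emph{integral} coefficients $a_i\ge 1$, so they cannot be ``absorbed into the round-up''; the method only lifts a subsystem of the form $|K_S+\roundup{Q}|$ with $Q\equiv(3-\tfrac1a)\pi^*(K_X)|_S$ plus a fractional correction, not the full $|4K_S|$. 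That weaker output is in fact enough, but it shifts all the weight onto the surface-level statement that $|K_S+\roundup{Q}|$ with $Q\gtrsim 2\sigma^*(K_{S_0})$ is birational precisely unless $S$ is a $(1,2)$-surface; this needs the inequality $\pi^*(K_X)|_S\ge\frac{a}{a+1}\sigma^*(K_{S_0})$ (from $a\ge p_g-1\ge 4$) together with a Reider-type analysis of the exceptional curve classes, and you explicitly defer exactly this step. It is the entire content of the dichotomy in (i).

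Second, in case (ii) the claim that Reider's hypotheses are met ``with room to spare'' is not tenable, and your sketch never locates where $p_g\ge 5$ is actually used. The divisor you can feed into Reider after restriction is not $3\pi^*(K_X)|_S$ but rather $(3\pi^*(K_X)-S)|_S\approx 2\pi^*(K_X)|_S$, whose self-intersection is about $4M^3$; the paper's own Example~\ref{pg4} ($X_{10}\subset\bP(1,1,1,1,5)$, $p_g=4$, $\varphi_1$ generically finite of degree $2$ onto $\bP^3$, $M^3=2$, so $L^2=8<10$) shows that $\varphi_4$ genuinely fails to be birational in the $\dim(B)=3$ case when $p_g=4$. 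Any correct argument must therefore exploit $p_g\ge 5$ essentially, e.g.\ via $\deg\Sigma\ge p_g-3\ge 2$ and a case division on $\deg(g)$: if $\deg(g)=1$ then $\varphi_1$ and hence $\varphi_4$ is birational, while if $\deg(g)\ge 2$ then $M^3\ge 4$ and the numerical criterion (plus an analysis of low-degree covering families of curves, for which Reider's exceptional clause is not automatic since $\pi^*(K_X)\cdot D$ can be a small fraction $1/r_X$) goes through. As written, your argument for (ii) would equally ``prove'' the statement for $p_g=4$, which is false.
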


\subsection{The case of $\dim(B)=2$}\

Let $C$ be a general fiber of $f$. The following result was proved by Chen--Zhang as well:

\begin{thm}\label{d=20}(see \cite[4.3]{CZ08}) Let $X$ be a minimal $3$-fold of general type with $p_g(X)\geq 5$. Keep the notation in \ref{setup}. Assume $\dim(B)=2$. Then $\varphi_{4,X}$ is non-birational if and only if $g(C)=2$ and $(\pi^*(K_X)\cdot C)=1$.
\end{thm}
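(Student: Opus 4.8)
The plan is to decide the birationality of $\varphi_{4,X}$ by restricting $|4K_{X'}|$ to a general fibre $C$ of $f$ and reading off the induced system on the curve $C$. First I would reduce everything to a fibrewise question. Since $p_g(X)\ge 5$ and $\dim(B)=2$, the canonical image $\Sigma=g(X')$ is a surface and $|4K_{X'}|\succcurlyeq|M|$, so $\varphi_{4,X}$ already separates two general points lying on two distinct general fibres of $f$ (when $s\colon B\to\Sigma$ fails to be birational one feeds in the additional sections of $|4K_{X'}|$ pulled back from $B$, available since $4M=f^{*}(4L)$). Hence $\varphi_{4,X}$ is birational precisely when it separates two general points $x_1,x_2$ of one and the same general fibre $C$, and the content of the theorem is this fibrewise separation.

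To analyse $C$ I would choose a general $S\in|M|$ through $C$, namely $S=f^{*}\Gamma$ for a general curve $\Gamma\subset B$ through $f(C)$; then $S$ is smooth and $C$ is a general fibre of $S\to\Gamma$, so $C^2=0$ on $S$. As $C$ is a fibre, both $S|_C=M|_C$ and $C|_C=N_{C/S}$ are trivial, so the double adjunction $K_C=(K_{X'}+S+C)|_C$ collapses to $K_{X'}|_C\sim K_C$. Therefore the complete restricted system is $|4K_{X'}|_C|=|4K_C|$, of degree $8(g(C)-1)\ge 2g(C)+1$, which is very ample on $C$ for every $g(C)\ge 2$. Thus separation of points on $C$ becomes \emph{automatic} as soon as the restriction map $H^0(X',4K_{X'})\to H^0(C,4K_{X'}|_C)$ is surjective, and the problem reduces entirely to this surjectivity.

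I would establish the surjectivity through Kawamata--Viehweg vanishing in two steps, $H^1(X',4K_{X'}-S)=0$ and $H^1\!\big(S,(4K_{X'}-C)|_S\big)=0$. The first is robust, since $3K_{X'}-M\equiv 2\pi^{*}(K_X)+3E_{\pi}+E_{\pi}'$ still carries the full nef and big $2\pi^{*}(K_X)$. The second step is delicate, and it is exactly here that the dichotomy lives: it asks for the nef and bigness on $S$ of
\[
N:=(3K_{X'}-S-C)|_S\equiv(2d_0-1)\,C+3(E_{\pi}+E_{\pi}')|_S,\qquad M|_S\equiv d_0C,
\]
whose horizontal positivity is controlled only by the exceptional data, through $E_{\pi}'\cdot C=\pi^{*}(K_X)\cdot C=\xi$ and $K_S\cdot C=2g(C)-2=\xi+E_{\pi}\cdot C$. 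One has $N\cdot C=3(2g(C)-2)>0$ and $N^2=6(2d_0-1)(2g(C)-2)+9\big((E_{\pi}+E_{\pi}')|_S\big)^2$, and the aim is to show that $N$ is nef and big whenever $(g(C),\xi)\neq(2,1)$, yielding surjectivity and hence birationality of $\varphi_{4,X}$; this proves ``$\varphi_{4,X}$ non-birational $\Rightarrow g(C)=2,\ \xi=1$''. For the converse I would treat the borderline case by hand: when $g(C)=2$ the fibre $C$ is hyperelliptic with $|K_C|=g^1_2$, and when $\xi=1$ one shows directly that the image of $H^0(X',4K_{X'})$ in $H^0(C,4K_C)$ is composed with the pencil $g^1_2$ — the degree-$1$ class $\pi^{*}(K_X)|_C$ is too small to generate sections non-invariant under the hyperelliptic involution. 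Hence $\varphi_{4,X}|_C$ is generically two-to-one, it does not separate a general pair of conjugate points of $C$, and $\varphi_{4,X}$ is non-birational, completing the equivalence.

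I expect the main obstacle to be precisely this sharp positivity analysis of $N$ on the surface $S$: one must bound the negative exceptional self-intersection $\big((E_{\pi}+E_{\pi}')|_S\big)^2$ (and control $d_0$) tightly enough to force $N$ nef and big off the single locus $g(C)=2,\ \xi=1$, and, on that locus, to identify the surviving sub-system as one composed with $g^1_2$. The auxiliary separation of distinct fibres when $\deg s>1$ is a second, milder point, handled by using the sections of $|4K_{X'}|$ pulled back from $B$ together with $p_g(X)\ge 5$.
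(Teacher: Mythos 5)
First, a remark on the comparison itself: the paper does not prove Theorem~\ref{d=20} at all --- it is quoted from \cite[Theorem 4.3]{CZ08} --- so your proposal has to stand on its own, and it does not. Your framing is sensible (restrict to a general fibre $C$, note via double adjunction that $K_{X'}|_C\sim K_C$ since $S|_C$ and $N_{C/S}$ are trivial, and control the image of $H^0(X',4K_{X'})\rightarrow H^0(C,4K_C)$), and your numerology ($E_\pi'\cdot C=\xi$, $E_\pi\cdot C+\xi=2g(C)-2$) is correct. But the argument collapses at its central step. The first vanishing $H^1(X',4K_{X'}-S)=0$ does not follow from Kawamata--Viehweg as you invoke it: $3K_{X'}-M\equiv 2\pi^*(K_X)+3E_\pi+E_\pi'$ is nef and big \emph{plus an integral effective divisor}, and $H^1(K+A+E)$ need not vanish for such sums; the standard repair is to pass to the subsystem $|K_{X'}+\roundup{2\pi^*(K_X)}+S|\preccurlyeq|4K_{X'}|$ and apply vanishing to the roundup of the genuinely nef and big part. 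More seriously, you make the whole dichotomy rest on the claim that $N\equiv(2d_0-1)C+3(E_\pi+E_\pi')|_S$ is nef and big exactly when $(g(C),\xi)\neq(2,1)$. That claim is neither proved nor true: for a vertical component $e$ of $\mathrm{Supp}(E_\pi|_S)$ one has $N\cdot e=3((E_\pi+E_\pi')|_S\cdot e)$, which is typically negative (any nonzero effective divisor supported on a negative-definite exceptional configuration is negative on some component), and this has nothing to do with $g(C)$ or $\xi$. So nefness of $N$ cannot be the mechanism of the theorem.

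The actual mechanism, both in \cite{CZ08} and in the way this paper uses the result (see the proof of Lemma~\ref{lem:horizontal} and Example~\ref{pg3}), is to apply vanishing only to the roundup of a nef and big $\bQ$-part (roughly $2\pi^*(K_X)|_S-C$), obtaining $|4K_{X'}||_C\succcurlyeq|K_C+D|$ with $\deg D$ controlled by $2\xi$ plus a contribution from $E_\pi'|_C$; the dichotomy is then pure curve theory ($\deg D\geq3$ forces very ampleness, while $\deg D=2$ is birational unless $C$ is hyperelliptic and $D\sim g^1_2$). In particular your proposal says nothing about the case $\xi=1$, $g(C)\geq3$, where the degree count only yields $\deg D=2$ and one must still exclude $D\sim g^1_2$; this is genuine work in \cite{CZ08} that the slogan ``$N$ nef and big off $(2,1)$'' does not replace. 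In the converse direction you assert that for $(g(C),\xi)=(2,1)$ the restricted system is composed with the $g^1_2$ because $\pi^*(K_X)|_C$ is ``too small''; what is actually needed is the linear equivalence $4E_\pi'|_C\sim 2K_C$, which requires both the lower bound $\Mov|4K_{X'}||_C\succcurlyeq|2K_C|$ (again via vanishing) and the degree-four upper bound $M_4|_C\leq 4\pi^*(K_X)|_C$ to pin the class down --- exactly the argument carried out in Lemma~\ref{lem:horizontal}. Finally, the separation of two distinct general fibres of $f$ when $\deg s>1$ is not achieved by ``sections pulled back from $B$'' (there are none beyond those of $|M|$); it is done by a vanishing argument on a general $S$ using $\pi^*(K_X)|_S\succcurlyeq d_0C$ with $d_0\geq p_g(X)-2\geq3$.
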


($\ddag$) {\bf From now on, we always assume:
 $$g(C)=2\ \text{and}\ (\pi^*(K_X)\cdot C)=1.$$}

Pick a general member $S$ in $|M|$. By Chen--Zhang \cite[Theorem 2.4]{CZ16},
one has
$$|2nK_{X'}||_S\lsgeq |n(K_{X'}+S)||_S=|nK_S|$$
for any sufficiently large and divisible integer $n$. Noting that
$$2n\pi^*(K_X)\geq M_{2n}$$
and that $|n\sigma^*(K_{S_0})|$ is base point free, we have
\begin{align}\label{cri}
\pi^*(K_X)|_S\simQ \frac{1}{2}\sigma^*(K_{S_0})+H_S,
\end{align}
 where $H_S$ is an effective $\bQ$-divisor on $S$.
 We may write
 \begin{align}\label{eq:res}
 \pi^*(K_X)|_S\simQ S|_S+E_{\pi}'|_S\ \text{and}\ S|_S\equiv aC,
 \end{align}
where $a\ge p_g(X)-2\ge 3$.

 \begin{lem}\label{lem:horizontal}  Let $X$ be a minimal $3$-fold of general type with $p_g(X)\geq 5$. Keep the notation in \ref{setup}.  Assume that $\dim (B)=2$ and that $\varphi_{4,X}$ is non-birational.
Then there exists exactly one exceptional divisor $E\subset \mathrm{Supp}(E_{\pi})$ such that $(E\cdot C)=1$.
\end{lem}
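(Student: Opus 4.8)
The plan is to reduce the whole statement to the single intersection number $(K_{X'}\cdot C)$ and then to read off both existence and uniqueness from an integrality constraint.

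First I would compute $(K_{X'}\cdot C)$ by adjunction on a general member $S\in|M|$. By \eqref{eq:res} the curve $C$ is the general fibre of the genus-$2$ fibration induced on $S$ by $f$, and $S|_S\equiv aC$; in particular $C$ is a smooth curve of genus $2$ with $(C^2)_S=0$, so that
\[
(S\cdot C)=(S|_S\cdot C)_S=a\,(C^2)_S=0 .
\]
The adjunction formula $K_S=(K_{X'}+S)|_S$ then gives
\[
(K_{X'}\cdot C)=(K_S\cdot C)_S-(S\cdot C)=\bigl(2g(C)-2\bigr)-0=2 .
\]
Combined with $(\pi^*(K_X)\cdot C)=1$ from $(\ddag)$ and $K_{X'}\simQ\pi^*(K_X)+E_{\pi}$ from \eqref{eq:adjunction}, this produces the crucial identity
\[
(E_{\pi}\cdot C)=(K_{X'}\cdot C)-(\pi^*(K_X)\cdot C)=1 .
\]

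Next I would split $E_{\pi}$ into an \emph{integral} part and a part supported over $\mathrm{Sing}(X)$. Writing $K_{X_0}=\alpha^*(K_X)+E_{\alpha}$ with $E_{\alpha}$ an effective $\alpha$-exceptional $\bQ$-divisor (here $E_{\alpha}\ge 0$ because $X$ has terminal singularities) and combining with \eqref{eq:blowupadjunction}, one obtains
\[
E_{\pi}=\beta^*(E_{\alpha})+\sum_{i=0}^{n}a_iE_i ,
\]
where the $a_i$ are the positive integers of \ref{setup}. Granting for the moment that $(\beta^*(E_{\alpha})\cdot C)=0$, the identity above becomes
\[
\sum_{i=0}^{n}a_i\,(E_i\cdot C)=1 .
\]
Here every $a_i$ is a positive integer, and every $(E_i\cdot C)$ is a non-negative integer since the general fibre $C$ lies in no fixed divisor. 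A sum of products of positive integers with non-negative integers equals $1$ only if exactly one index $i_0$ contributes, forcing $a_{i_0}=1$ and $(E_{i_0}\cdot C)=1$ while $(E_j\cdot C)=0$ for $j\ne i_0$. Setting $E:=E_{i_0}$ then gives the asserted unique divisor, and the vanishing on $\beta^*(E_{\alpha})$ guarantees that no other component of $\mathrm{Supp}(E_{\pi})$ meets $C$.

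The main obstacle is precisely the vanishing $(\beta^*(E_{\alpha})\cdot C)=0$ invoked above, i.e. proving that the exceptional locus lying over $\mathrm{Sing}(X)$ is \emph{vertical} for $f$, so that it avoids the general fibre $C$. This is exactly what licenses the clean integrality count: the divisor $E_{\alpha}$ may carry discrepancy coefficients strictly smaller than $1$, so a horizontal exceptional divisor over a singular point could a priori contribute a fractional term to $(E_{\pi}\cdot C)$ and wreck the argument. To establish verticality I would use that $\pi^*(K_X)$ is nef and restricts to a numerically trivial divisor on every $\pi$-exceptional divisor over the finite set $\mathrm{Sing}(X)$; one then argues that the base-point-free system $|M|$ contracts each such divisor, equivalently $g$ sends it to a point, whence $f$ maps it into a proper closed subset of $B$ and a general fibre avoids it. The delicate subcase is that of an exceptional divisor contained in the fixed part $\mathrm{Supp}(E_{\pi}')$, where $(\pi^*(K_X)-M)|=E_{\pi}'|$ need not be effective on it; handling this point, together with the careful verification that $C$ really is a smooth genus-$2$ fibre of $f|_S$ with $(C^2)_S=0$ so that surface adjunction applies, is where the genuine work lies. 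Once these two facts are in place, the conclusion is the purely arithmetic observation about writing $1$ as a sum of products of positive integers.
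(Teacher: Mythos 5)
Your reduction to the identity $(E_{\pi}\cdot C)=1$ is correct and agrees with the opening lines of the paper's proof, and the decomposition $E_{\pi}=\beta^*(E_{\alpha})+\sum_i a_iE_i$ is also right. But the whole argument then stands or falls with the claim $(\beta^*(E_{\alpha})\cdot C)=0$, i.e.\ that every exceptional divisor lying over $\mathrm{Sing}(X)$ is vertical for $f$, and you have not proved this. Your sketch --- $\pi^*(K_X)$ is numerically trivial on such a divisor $D$, hence $|M|$ contracts it --- works only when $D\not\subset\mathrm{Supp}(E_{\pi}')$: there one gets $(M|_D^2)=-(M|_D\cdot E_{\pi}'|_D)\le 0$, so $M|_D$ is not big and $g(D)$ is at most a curve, which a general fibre $C$ avoids. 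When $D\subset\mathrm{Supp}(E_{\pi}')$ the term $(M|_D\cdot D|_D)$ can be negative and nothing in your argument prevents $M|_D$ from being big, i.e.\ $D$ from dominating $B$; you flag this yourself as ``the delicate subcase \dots\ where the genuine work lies,'' which is an admission that the decisive step is missing. And it is decisive: a horizontal divisor over a terminal point carries a discrepancy coefficient that may be a proper fraction (e.g.\ $1/r$ over a point of type $\frac{1}{r}(1,-1,1)$), so it could contribute, say, $\frac{1}{2}\cdot 2$ to $(E_{\pi}\cdot C)=1$, in which case \emph{no} exceptional divisor would satisfy $(E\cdot C)=1$ and both existence and uniqueness would fail. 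The integrality count therefore does not close.

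The paper takes a genuinely different route that sidesteps the fractional coefficients: it restricts to the genus-$2$ fibre $C$ and exploits that $E_{\pi}|_C+E_{\pi}'|_C=(K_{X'}-M)|_C$ is an \emph{integral} effective divisor of degree $2$ in $|K_C|$; it then pins down $E_{\pi}'|_C$ as a single point $P$ with $2P\sim K_C$ by excluding the two-point alternative via $\Mov|4K_{X'}||_C\lsgeq|2K_C|$ and the birationality of $\varphi_{5,X}$ from \cite{IJM} --- inputs your proposal never invokes --- and deduces $E_{\pi}|_C=P$. Even then, the fact that the resulting divisor $E$ is one of the $E_i$ (and not a divisor over a singular point) is only established afterwards, in Lemma \ref{lem:horizontal2}, by a Zariski-decomposition argument on $S$ showing $(\sigma^*(K_{S_0})\cdot\Gamma)>0$. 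That is the scale of work your verticality claim would have to replace, so leaving it as an acknowledged obstacle leaves the proof incomplete.
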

\begin{proof} By Theorem \ref{d=20} and Relation \eqref{eq:res}, we have $(E_{\pi}'|_S\cdot C)=1$. By \eqref{eq:adjunction} and the assumption, we have $(E_{\pi}|_S\cdot C)=1$.

{}First we prove that the horizontal part of $\mathrm{Supp}(E_{\pi}^{'}|_S)$ is an integral curve $\Gamma$. Take a general member $K_1$ in $|K_X|$, we have $\pi^*(K_1)|_C=E_{\pi}^{'}|_C$.
It is clear that one of the following cases occurs:
\begin{enumerate}
\item[(a)] $\mathrm{Supp}(E_{\pi}'|_C)$ consists of one single point $P$ with $2P\sim K_C$;
\item[(b)] $\mathrm{Supp}(E_{\pi}'|_C)$ consists of two different points $P$ and $Q$, where $P+Q\sim K_C$.
\end{enumerate}
We will exclude the possibility of $(b)$. Otherwise, we may write $E_{\pi}'|_C=\varepsilon P+(1-\varepsilon)Q$, where $0<\varepsilon<1$.

By the argument in the proof of \cite[Proposition 4.6]{CZ08}, we know that $\Mov|4K_{X'}||_C\lsgeq |2K_C|$.
Noting that $\deg(4E_{\pi}'|_C)=4$, we see $4E_{\pi}'|_C\sim 2K_C$. Thus  $4\varepsilon$ is a positive integer. If $4\varepsilon=1$, then $4E_{\pi}^{'}|_C\sim K_C+2Q$, which implies that $2Q\sim K_C$, a contradiction. Similarly, we can conclude that $4\varepsilon\neq 3$. Thus we have $\varepsilon=\frac{1}{2}$ and  $$2P+2Q=\llcorner 5E_{\pi}^{'} |_C\lrcorner\ge {M_5}|_C,$$
for a general fiber $C$. This simply implies that $\varphi_{5,X}|_C$ is not birational and neither is $\varphi_{5,X}$, which contradicts to  \cite[Theorem 1.2(2)]{IJM}. Therefore the only possibility is case $(a)$.

Since $E_{\pi}|_C+E_{\pi}^{'}|_C\in |K_C|$ and $2P\in |K_C|$,
we have $E_{\pi}|_C=P$, which implies that the horizontal part  of $E_{\pi}|_S$ (with respect to the fibration $f|_S$) coincides with the horizontal part of $E_{\pi}^{'}|_S$ (with respect to the fibration $f|_S$). Since $\mathrm{Supp}(E_{\pi}|_C)$ consists of exactly one point for a general $C$, there exists only one exceptional divisor $E$ such that $(E\cdot C)=1$.
In particular, the coefficient of $E$ in $E_{\pi}$ (and hence in $E_{\pi}'$) is $1$.

Furthermore, for any other $\pi$-exceptional divisor $E'\neq E$, $E'|_S$ is vertical with respect to $f|_S$ for a general member $S$.
\end{proof}

By Lemma \ref{lem:horizontal},  for a general member $S\in |M|$, we may write
\begin{equation}\label{eq:hor}
E_{\pi}|_S=\Gamma+E_V, \ \ E_{\pi}'|_S=\Gamma+E_V',
\end{equation}
where $\Gamma$ is the horizontal part satisfying $(\Gamma\cdot C)=1$ for a smooth fiber $C$ contained in $S$, $E_V$ and $E_V'$ are both vertical parts with respect to $f|_S$.

\begin{lem}\label{lem:horizontal2} Let $X$ be a minimal $3$-fold of general type with $p_g(X)\geq 5$. Keep the notation in \ref{setup}.  Assume that $\dim (B)=2$ and that $\varphi_{4,X}$ is non-birational.
We have $(\pi^*(K_X)|_S\cdot \Gamma)>0$. In particular, we have $E=E_i$ for some $i$, $a_i=b_i=1$ and $\pi(E)$ is an irreducible curve on $X$.
\end{lem}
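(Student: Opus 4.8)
The plan is to establish the strict positivity of $(\pi^*(K_X)|_S\cdot\Gamma)$ first and then read off the structural assertions from Lemma \ref{ab}. Since $X$ is minimal, $\pi^*(K_X)$ is nef, so $(\pi^*(K_X)|_S\cdot\Gamma)\ge 0$ is automatic and the whole content is to exclude equality. I would run the argument through the two descriptions of $\pi^*(K_X)|_S$ already at hand, namely the effective splitting \eqref{eq:res} and the inequality \eqref{cri}. As a preliminary normalization I would pin down $H_S$: a general fibre $C$ maps isomorphically onto a genus-$2$ fibre of the induced fibration on the minimal model $\sigma\colon S\to S_0$, so $(\sigma^*(K_{S_0})\cdot C)=2g(C)-2=2$; restricting $(\pi^*(K_X)\cdot C)=1$ to $S$ and feeding this into \eqref{cri} gives $(H_S\cdot C)=0$, which for the effective divisor $H_S$ and the moving fibre class $C$ forces $H_S$ to be \emph{vertical} for the genus-$2$ fibration $f|_S\colon S\to B_1$ (here $B_1:=f(S)$ is a curve).

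With $H_S$ vertical I would decompose
\[
(\pi^*(K_X)|_S\cdot\Gamma)=\tfrac12(\sigma^*(K_{S_0})\cdot\Gamma)+(H_S\cdot\Gamma),
\]
and note that both summands are $\ge 0$: the first because $K_{S_0}$ is nef, the second because $\Gamma$ is horizontal while $H_S$ is vertical, so $\Gamma\not\subseteq\mathrm{Supp}(H_S)$. Thus it suffices to exclude the simultaneous vanishing of the two terms. Assuming both vanish, $\Gamma_0:=\sigma_*\Gamma$ is $K_{S_0}$-trivial; since $\Gamma$ dominates $B_1$ it is not $\sigma$-contracted, so $\Gamma_0$ is an irreducible $K_{S_0}$-trivial curve on the minimal surface $S_0$ of general type, i.e. a $(-2)$-curve. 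In particular $\Gamma_0\cong\bP^1$, which forces $\Gamma\cong\bP^1$ and, as $\Gamma\to B_1$ has degree $1$, also $B_1\cong\bP^1$.

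The hard part is to contradict this degenerate $(-2)$-section configuration, and I expect it to be the only real obstacle. The route I would pursue is to read the vanishing globally on $X'$: $(\pi^*(K_X)|_S\cdot\Gamma)=(\pi^*(K_X)\cdot\Gamma)=0$ says $\pi(\Gamma)$ is contracted by the canonical-model morphism $\psi\colon X\to X_{\mathrm{can}}$. As $X$ is $\bQ$-factorial terminal and minimal, $\psi$ is small, so $\mathrm{Exc}(\psi)$ is a finite union of rational curves and $E$ is contracted by $\psi\circ\pi$ to a single point $q\in X_{\mathrm{can}}$. On the other hand $(E\cdot C)=1$ holds for \emph{every} fibre $C$, so every member of the two-dimensional covering family $\{\psi\pi(C)\}$ of genus-$2$, $K_{X_{\mathrm{can}}}$-degree-$1$ curves passes through the fixed point $q$. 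Restricting to a complete curve in $B$ gives a non-trivial complete family of such curves through $q$, and bend-and-break then produces a rational curve through $q$ of $K_{X_{\mathrm{can}}}$-degree $\le 1$; but a degree-$1$ class for the ample $K_{X_{\mathrm{can}}}$ is indecomposable, so the curve cannot break and would itself be rational, contradicting that $\psi\pi$ is birational on a general $C$ of genus $2$. Alternatively, one can try to mimic the proof of Lemma \ref{lem:horizontal} and show directly that this configuration makes $\varphi_{5,X}$ non-birational, contradicting \cite[Theorem 1.2(2)]{IJM}.

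Granting $(\pi^*(K_X)|_S\cdot\Gamma)>0$, the remaining assertions are formal. Positivity gives $(K_X\cdot\pi_*\Gamma)>0$, so $\pi(\Gamma)$ is a curve; as $\Gamma\subseteq E$ and $E$ is $\pi$-exceptional we have $\dim\pi(E)\le 1$, whence $\pi(E)=\pi(\Gamma)$ is an irreducible curve. Since the $\alpha$-exceptional divisors all lie over the finitely many points of $\mathrm{Sing}(X)$ and hence have zero-dimensional $\pi$-image, $E$ cannot be $\alpha$-exceptional and must be one of the $\beta$-exceptional divisors, say $E=E_i$. Because $E_i$ does not lie over $\mathrm{Sing}(X)$, the coefficient of $E_i$ in $E_\pi$ equals $a_i$, while its coefficient in $E_\pi'$ equals $b_i$ plus a nonnegative contribution from $\beta^*(Z_0')$; comparing with the value $1$ recorded in Lemma \ref{lem:horizontal} forces $a_i=b_i=1$. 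Finally Lemma \ref{ab}(ii) applies to give that $W_i$ is a smooth curve, confirming that $\pi(E)=\pi(E_i)$ is an irreducible curve on $X$.
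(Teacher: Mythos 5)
Your setup and your closing deductions match the paper: you correctly reduce to excluding $(\sigma^*(K_{S_0})\cdot\Gamma)=0$ by showing $(H_S\cdot C)=0$ forces $H_S$ to be vertical, you correctly identify $\overline{\Gamma}=\sigma_*\Gamma$ as a $(-2)$-curve in the putative degenerate case, and your derivation of $E=E_i$, $a_i=b_i=1$ from the coefficient $1$ in Lemma \ref{lem:horizontal} is sound (indeed slightly more explicit than the paper's). The problem is the ``hard part,'' which is exactly where your argument diverges from the paper's and where it has genuine gaps. First, the claim that the canonical morphism $\psi\colon X\to X_{\mathrm{can}}$ is small because $X$ is $\bQ$-factorial terminal is false: $K_X=\psi^*K_{X_{\mathrm{can}}}$ only forces the $\psi$-exceptional divisors to have discrepancy $0$ over $X_{\mathrm{can}}$, i.e.\ $X_{\mathrm{can}}$ is canonical but possibly non-terminal (e.g.\ a $\frac13(1,1,1)$ point), and $\psi$ may well contract divisors. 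Second, even granting that the $\Gamma$'s sweep out $E$ as $S$ varies, the vanishing $(\pi^*(K_X)\cdot\Gamma)=0$ only shows $\dim\psi\pi(E)\le 1$; if $\psi\pi(E)$ is a curve, the various $\Gamma$'s are contracted to \emph{different} points and there is no fixed point $q$ through which all the genus-$2$ curves pass, so bend-and-break does not get off the ground. Third, even with a fixed point, ``a degree-$1$ class for the ample $K_{X_{\mathrm{can}}}$ is indecomposable'' fails because intersection numbers on $X_{\mathrm{can}}$ lie only in $\frac1r\bZ$, and the concluding contradiction conflates a rational curve appearing in a \emph{special} member of the family with the general member being of genus $2$.

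A telling symptom is that your proposed contradiction never uses $p_g(X)\ge 5$, whereas the paper's does, and essentially at this exact spot. The paper stays on the surface $S$: it writes $K_S=\pi^*(K_X)|_S+S|_S+\Gamma+E_V$ by adjunction, takes the Zariski decomposition to identify $\sigma^*(K_{S_0})=\pi^*(K_X)|_S+S|_S+\Gamma+A$ with $A$ vertical (the coefficient of $\Gamma$ in the positive part must be $1$ to account for $(\sigma^*(K_{S_0})\cdot C)=2$ versus $(\pi^*(K_X)|_S\cdot C)=1$), substitutes $\pi^*(K_X)|_S\equiv aC+\Gamma+E_V'$ to get $\sigma^*(K_{S_0})\equiv 2aC+2\Gamma+E_V'+A$, and pushes forward to $S_0$ to obtain $(K_{S_0}\cdot\overline{\Gamma})\ge 2a-4\ge 2$ from $a\ge p_g(X)-2\ge 3$, contradicting $(K_{S_0}\cdot\overline{\Gamma})=0$. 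This is the key computation missing from your proposal; as written, your argument does not close.
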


\begin{proof} It is clear that $p_g(S)=h^0((K_{X'}+S)|_S)\geq 3$,  so $S$ is not a $(1,2)$-surface and
we have $(\sigma^*(K_{S_0})\cdot C)\ge 2$ by the Hodge index theorem and the result of Bombieri \cite{Bom} that a minimal $(1,1)$ surface is simply connected (see also \cite[Lemma 2.4]{CC3} for a direct reference).

By Relation \eqref{cri}, we have $(\sigma^*(K_{S_0})\cdot C)=2$ and $(H_S\cdot C)=0$. Thus $H_S$ is composed of vertical divisors with respect to $f|_S$. Since $\Gamma$ is the section of the fibration $f|_S$, we have $(\pi^*(K_X)|_S\cdot\Gamma)\ge\frac{1}{2}(\sigma^*(K_{S_0})\cdot \Gamma)$ by \eqref{cri}. Thus it is sufficient to prove $(\sigma^*(K_{S_0})\cdot \Gamma)>0$.

Suppose $(\sigma^*(K_{S_0})\cdot \Gamma)=0$.  We consider the contraction $\sigma:S\rw S_0$ onto the minimal model $S_0$.
Since $g(C)=2$, $(\sigma^*(K_{S_0})\cdot C)=2$  and $C^2=0$, we see that all exceptional divisors of $\sigma$ are contained in special fibers of $f|_S$. Thus $C=\sigma^*(\overline{C})$ where $\overline{C}$ comes from a free pencil of genus $2$ on $S_0$.
Let $\overline{\Gamma}=\sigma_{*}(\Gamma)$. Since $(\overline{\Gamma}\cdot\overline{C})=(\Gamma\cdot C)=1$, we conclude that $\overline{\Gamma}$ is a section of fibration induced from the free pencil generated by $\overline{C}$. In particular, $\overline{\Gamma}\neq 0$. Thus $\overline{\Gamma}$ is a $(-2)$-curve on $S_0$. By the adjunction formula and \eqref{eq:hor}, we can write
\begin{align*}
K_S=(K_{X'}+S)|_S=\pi^*(K_X)|_S+S|_S+\Gamma+E_V.
\end{align*}
Considering the Zariski decomposition of the above divisor, we can write
\begin{align*}
\pi^*(K_X)|_S+S|_S+\Gamma+E_V\equiv (\pi^*(K_X)|_S+S|_S+N^{+})+N^{-},
\end{align*}
where
\begin{enumerate}
\item[(z1)] both $N^+$ and $N^-$ are effective $\bQ$-divisors and $N^++N^-=\Gamma+E_V$;
\item[(z2)] the $\bQ$-divisor $\pi^*(K_X)|_S+S|_S+N^+ $ is equal to $\sigma^*(K_{S_0})$;
\item[(z3)] $((\pi^*(K_X)|_S+S|_S+N^+) \cdot N^-)= 0$.
\end{enumerate}
Since $(\sigma^*(K_{S_0})\cdot C)=2$ and $(\pi^*(K_X)|_S\cdot C)=1$, we have $N^+=\Gamma+A$, where $A$ is an effective vertical divisor. Thus we can write
\begin{align*}
\sigma^*(K_{S_0})&=\pi^*(K_X)|_{S}+S|_S+\Gamma+A\\
&\equiv 2aC+2\Gamma+E'_V+A.
\end{align*}
Pushing forward to $S_0$, we have
\begin{align*}
K_{S_0}\equiv 2a\overline{C}+2\overline{\Gamma}+\sigma_{*}(E'_V+A),
\end{align*}
where $\sigma_{*}(E'_V+A)$ is clearly vertical.
Then we get $(K_{S_0}\cdot \overline{\Gamma})\ge 2a-4\ge 2$, which contradicts to our assumption. So our conclusion is that $(\sigma^*(K_{S_0})\cdot \Gamma)>0$.

Note that $\Gamma$ comes from the exceptional divisor $E$. Since $(\pi^*(K_X)|_S\cdot E|_S)\ge (\pi^*(K_X)\cdot\Gamma)>0$, we see that $E=E_i$ for some index $i$ by the construction of $\pi$. In particular, by Lemma {\ref{lem:horizontal2}}, we have $a_i=b_i=1$.
\end{proof}

By Lemma \ref{ab},  one sees that $E$ comes from the blow-up of a smooth curve.  Thus $E$ carries a natural fibration whose general fiber is a smooth rational curve. Denote by $l_E$ the general fiber of this fibration. We have the following observation:

\begin{lem}\label{lem:key} Under the same assumption as that of Lemma \ref{lem:horizontal2}, keep the above notation. We have $(S\cdot l_E)=1$. In particular, we have $S|_E\ge l_1+l_2$ for two distinct general elements in the same algebraic class of $l_E$  on $E$.
\end{lem}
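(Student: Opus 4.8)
The plan is to compute the intersection number $(S\cdot l_E)$ by exploiting that $l_E$ is contracted by $\beta$. Since $l_E$ is a general fiber of the ruling $E\to\beta(E)$ and $\beta(E)$ is a curve, $\beta$ contracts $l_E$ to a point, so $\beta^*(M_0)\cdot l_E=0$ and likewise $\pi^*(K_X)\cdot l_E=0$. Writing $M=\beta^*(M_0)-\sum_j b_jE_j$ and intersecting with $l_E$, I would use that $E=E_i$ is the exceptional divisor of the blow-up of the smooth curve $W_i$ (Lemma~\ref{ab}(ii)), so that $(E_i\cdot l_E)=-1$, while for a general fiber $l_E$ every other exceptional divisor $E_j$ ($j\ne i$) is disjoint from it. Together with $b_i=1$ from Lemma~\ref{lem:horizontal2}, this yields $(S\cdot l_E)=(M\cdot l_E)=b_i=1$. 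Since $|M|$ is base point free, $S$ is nef and $(S\cdot l_E)\ge0$, which is consistent with and confirms the value.

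For the last assertion I would restrict the equality $\beta^*(M_0)=M+\sum_j b_jE_j$ to $E$. As $\beta|_E\colon E\to B_E:=\beta(E)$ is exactly the ruling, $\beta^*(M_0)|_E=(\beta|_E)^*(M_0|_{B_E})$ is the pullback of the effective divisor $M_0|_{B_E}$ of degree $d_0:=(M_0\cdot B_E)$, hence a sum of $d_0$ fibers in the algebraic class of $l_E$; moreover, since $\Mov|K_{X_0}|=|M_0|$ moves, this divisor moves on $B_E$ and for a general member its fibers lie over $d_0$ general, hence distinct, points. Thus
\begin{align*}
S|_E=(\beta|_E)^*(M_0|_{B_E})-E_i|_E-\sum_{j\ne i}b_jE_j|_E,
\end{align*}
where $E_i|_E$ is horizontal with respect to the ruling (as $(E_i|_E\cdot l_E)=-1\ne0$) and each $E_j|_E$ with $j\ne i$ is supported on finitely many special fibers. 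Consequently the fibers of $(\beta|_E)^*(M_0|_{B_E})$ lying over general points of $\mathrm{Supp}(M_0|_{B_E})$ are not contained in the horizontal divisor $E_i|_E$, nor among the special fibers carrying the $E_j|_E$; being components of the effective divisor $S|_E$, they persist. Provided $d_0\ge2$, this produces two distinct general fibers $l_1,l_2$ with $S|_E\ge l_1+l_2$.

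The main obstacle is to establish $d_0=(M_0\cdot B_E)\ge2$, equivalently that $S|_E$ genuinely carries at least two fibers rather than being absorbed into the section together with the special-fiber corrections. Here I would feed in the positivity obtained in Lemma~\ref{lem:horizontal2}: the image $\pi(E)=\alpha(B_E)$ is an irreducible curve along which $(\pi^*(K_X)|_S\cdot\Gamma)>0$, and since $\pi^*(K_X)|_E\equiv(K_X\cdot\pi(E))\,l_E$ while $S|_E=\pi^*(K_X)|_E-E_{\pi}'|_E$, the $l_E$-coefficient of $S|_E$ is governed by $K_X\cdot\pi(E)$; translating this positivity into the numerical bound $d_0\ge2$ is the delicate point. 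A secondary technical step, needed already for $(S\cdot l_E)=1$, is to verify that no exceptional component other than $E_i$, and no non-exceptional component of $E_{\pi}'$, dominates $B_E$, so that a general fiber $l_E$ really meets only $E_i$; I expect this to follow from the construction of the tower $\beta$ in \ref{setup} together with the fact that $a_i=b_i=1$.
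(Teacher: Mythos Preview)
Your approach to the first assertion has a real gap. The claim that every $E_j$ with $j\ne i$ is disjoint from a general $l_E$ can fail: if the center $W_i$ happens to lie inside the strict transform $E_j^{(i)}$ of an \emph{earlier} exceptional divisor, then $E_j^{(i+1)}$ cuts out a section of $E_i^i\to W_i$ and meets every fiber; nothing in the construction excludes this. The paper sidesteps the issue by working with the \emph{total} transform $\tilde{E}$ of $E_i^i$: then $(\tilde{E}\cdot l_E)=-1$ by the projection formula, any exceptional component not in $\tilde{E}$ misses a general $l_E$, and together with $(\pi^*(K_X)\cdot l_E)=0$ one only obtains the inequality $(S\cdot l_E)\le 1$. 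For the reverse direction your nefness remark gives only $(S\cdot l_E)\ge 0$, which is not enough. The paper gets strict positivity from a fact you never invoke: since $(E\cdot C)=1$, the map $f|_E\colon E\to B$ is birational, and as $f$ is defined by $|S|$ this forces $(S|_E\cdot l_E)>0$.

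Your route to $S|_E\ge l_1+l_2$ is also incomplete, and you correctly identify the obstacle $d_0\ge 2$ as unresolved. The paper bypasses the whole geometric analysis of $\beta^*(M_0)|_E$ with a short cohomological count. Because $(S-E)\cdot C=-1<0$ and $C$ sweeps out $X'$, one has $h^0(X',S-E)=0$; the restriction sequence then gives $h^0(E,S|_E)\ge h^0(X',S)=p_g(X)\ge 5$. Since $l_E\cong\bP^1$ and $(S\cdot l_E)=1$, we have $h^0(l_k,S|_{l_k})=2$ for $k=1,2$, so the exact sequence
\[
0\to H^0(E,S|_E-l_1-l_2)\to H^0(E,S|_E)\to H^0(l_1,S|_{l_1})\oplus H^0(l_2,S|_{l_2})
\]
forces $h^0(E,S|_E-l_1-l_2)\ge 5-4=1$. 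This uses $p_g\ge 5$ directly and avoids any need to control $(M_0\cdot B_E)$ or to track which fibers survive in the decomposition you wrote down.
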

\begin{proof}
Denote by $E_i^{i}$ the exceptional divisor of $\pi_i$ so that $E$ dominates $E_i^i$ and by $l_{E_i^{i}}$ the corresponding general ruling. We have $(E_i^{i}\cdot l_{E_i^{i}})=-1$. Denote by $\tilde{E}$ the total transform of $E_i^{i}$ on $X'$. Then we have $(\tilde{E}\cdot l_E)=-1$ by the projection formula. For any exceptional divisor $D$ not contained in $\tilde{E}$, we have $(D\cdot l_E)=0$ by the choice of $l_E$. 
By \eqref{eq:adjunction}, $(\pi^*(K_X)\cdot l_E)=0$ and our construction of $\pi$, we have $(S\cdot l_E)\le 1$. Since $f|_E$ is a birational morphism and $f$ is induced by $|S|$, we have $(S\cdot l_E)_{X'}=(S|_E\cdot l_E)_E>0$. Since $E$ is a smooth projective surface and $S|_E$ is a Cartier divisor, we have
$(S\cdot l_E)=1$.

Take two distinct general fibers $l_1$ and $l_2$ in the ruling of $E$. Since $l_E$ is a smooth rational curve, we have $h^0(l_E, S|_{l_E})=2$. Since $((S-E)\cdot C)=-1<0$, we have $h^0(X', S-E)=0$. Thus we have $h^0(E, S|_E)\ge p_g(X)\ge 5$. Consider the natural exact sequence
\begin{align*}
0\rightarrow H^0(E, S|_E-l_1-l_2)\rightarrow H^0(E, S|_E)\rightarrow H^0(l_1, S|_{l_1})\oplus H^0(l_2, S|_{l_2}).
\end{align*}
We naturally get $h^0(E, S|_E-l_1-l_2)\ge 1$, which implies that $S|_E\ge l_1+l_2$.
\end{proof}

Now we are ready to prove the main statement.

\begin{thm}\label{d212} Let $X$ be a minimal $3$-fold of general type with $p_g(X)\geq 5$. Keep the notation in \ref{setup}.  Assume that $\dim (B)=2$ and that $\varphi_{4,X}$ is non-birational. Then $X$ is birationally fibred by a pencil of $(1,2)$-surfaces.
 \end{thm}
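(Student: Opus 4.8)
The plan is to exhibit, on a suitable birational model of $X$, a fibration over a smooth curve whose general fibre is a $(1,2)$-surface; the whole construction is driven by the exceptional divisor $E$ produced in Lemmas \ref{lem:horizontal2} and \ref{lem:key}.

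First I would promote the ruling of $E$ to a fibration of the threefold. Since $(E\cdot C)=1$ and $C$ is the general fibre of $f$, the restriction $f|_E\colon E\to B$ is generically one-to-one, hence birational; on the other hand, by Lemma \ref{ab} and $a_i=b_i=1$, the divisor $E$ is exceptional over a smooth curve $W=W_i$, so it carries a ruling $p\colon E\to W$ with general fibre $l_E$. Transporting this ruling through $f|_E$, the curves $\bar\ell:=f(l_E)$ form a covering family of rational curves on $B$ parametrised by $W$; equivalently, $B$ is birationally ruled over $W$. Here the estimate $S|_E\ge l_1+l_2$ of Lemma \ref{lem:key} is what guarantees that these ruling fibres genuinely move inside $|M|\big|_E$, so the induced map is a nontrivial fibration. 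Composing $f$ with the ruling $B\dashrightarrow W$, resolving, and taking a Stein factorisation, I obtain a fibration $h\colon\widetilde X\to T$ from a nonsingular model $\widetilde X$ of $X$ onto a smooth curve $T$, whose general fibre $G$ is birational to $f^{-1}(\bar\ell)$ for a general ruling fibre $\bar\ell\subset B$.

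Next I would record the geometry of $G$. Since $\bar\ell$ is swept out by the general fibres $C$ of $f$, the surface $G$ inherits a genus-$2$ fibration $\rho\colon G\to\bar\ell\cong\bP^1$ with general fibre $C$; because $(E\cdot C)=1$, the curve $E|_G=l_E\cong\bP^1$ is a section of $\rho$. Restricting \eqref{eq:adjunction} gives $K_G=\pi^*(K_X)|_G+E_\pi|_G$ with $(\pi^*(K_X)|_G\cdot C)=1$ and $K_G\cdot C=2$, exactly the numerology of the canonical genus-$2$ pencil of a $(1,2)$-surface. Moreover $G$ is of general type: by easy addition applied to $h$ one has $3=\kappa(\widetilde X)\le\kappa(G)+\dim T=\kappa(G)+1$, so $\kappa(G)=2$. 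It then remains to identify $G$ with a $(1,2)$-surface, and the cleanest route is Bombieri's criterion: it suffices to prove that $\varphi_{4,G}$ is non-birational. Since $3K_X$ is effective we have $|4K_X|\lsgeq|K_X|$, so $\varphi_{4,X}$ separates whatever $\varphi_{1,X}$ separates among general points; as $h$ factors through $\varphi_{1,X}$ (via $X\dashrightarrow\Sigma$, the finite cover $B\to\Sigma$, and $B\dashrightarrow W$), this means $\varphi_{4,X}$ separates two general fibres of $h$. Consequently the non-birationality of $\varphi_{4,X}$ must already manifest along a single general fibre, i.e. the restricted system $|4K_{X'}|\big|_G$ fails to separate two general points of $G$; upgrading this to the full $|4K_G|$ and invoking Bombieri's theorem then forces $G$ to be a $(1,2)$-surface, so that $h$ is the desired pencil of $(1,2)$-surfaces.

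The hard part will be precisely this last step. One must ensure that the failure of $\varphi_{4,X}$ localises to one fibre rather than coming solely from an inability to separate distinct fibres, and one must compare $|4K_{X'}|\big|_G$ with the full $|4K_G|$: concretely, a Kawamata–Viehweg vanishing argument showing $H^1(X',4K_{X'}-G)=0$, which requires verifying that $3K_{X'}-G$ is big and nef, so that $H^0(X',4K_{X'})\to H^0(G,4K_G)$ is surjective and the two systems agree up to a fixed divisor. An alternative that sidesteps the vanishing is to compute the invariants of $G$ directly: adapting \eqref{cri} to write $\pi^*(K_X)|_G\simQ\tfrac12\sigma_G^*(K_{G_0})+(\text{vertical})$ and combining this with the section $E|_G$ and the Hodge index theorem on $G$, one pins down $(\sigma_G^*(K_{G_0})\cdot C)=2$, $K_{G_0}^2=1$ and $p_g(G)=2$. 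In either approach it is the sharp value $(\pi^*(K_X)\cdot C)=1$ that is decisive in excluding all genus-$2$-fibred surfaces of general type other than the $(1,2)$-surface.
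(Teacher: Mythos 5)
Your construction of the pencil is essentially the paper's: you transport the ruling of $E$ through the birational map $f|_E\colon E\to B$ (using $(E\cdot C)=1$), pass to a common modification $W$ of $E$ and $B$, and obtain a fibration over a curve whose fibres $F''=f''^*(l_W)$ are the candidate $(1,2)$-surfaces. The gap is in the identification step, and it stems from misreading what Lemma \ref{lem:key} is for. You use $S|_E\ge l_1+l_2$ only to say the ruling fibres ``move'' (which is not needed: the ruling exists because $E$ is exceptional over a curve). Its real role is numerical: it gives $\tilde{\pi}^*(K_X)\geq S''\geq F_1''+F_2''\equiv 2F''$, i.e.\ a free sub-pencil $|F_1''+F_2''|$ of $|K_{X''}|$ with the canonical class dominating \emph{twice} the general fibre. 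By \cite[Lemma 2.1]{CC3} this upgrades the comparison coefficient from $\frac{1}{2}$ to $\frac{2}{3}$: one gets $\tilde{\pi}^*(K_X)|_{F''}\geq \frac{2}{3}\sigma''^*(K_{F_0''})$, whence $1=(\tilde{\pi}^*(K_X)\cdot C_F'')\geq\frac{2}{3}(\sigma''^*(K_{F_0''})\cdot C_F'')$ forces $(\sigma''^*(K_{F_0''})\cdot C_F'')=1$, and Bombieri's characterization finishes. Your alternative route (b), which adapts \eqref{cri} with the coefficient $\frac{1}{2}$, only yields $(\sigma_G^*(K_{G_0})\cdot C)\leq 2$ and cannot exclude the value $2$; moreover the invariants you announce ($K\cdot C=2$, $K^2=1$, $p_g=2$) are not the numerology of the canonical genus-$2$ pencil of a $(1,2)$-surface, for which $(\sigma^*(K_{F_0})\cdot C)=1$.

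Your route (a) is also not salvageable as stated. The surjectivity of $H^0(X'',4K_{X''})\to H^0(F'',4K_{F''})$ would require a vanishing for which ``$3K_{X''}-F''$ nef and big'' is the wrong hypothesis ($K_{X''}$ is not nef, and even $3\tilde{\pi}^*(K_X)-F''$, being nef plus effective, need not be nef); without that surjectivity, non-birationality of the restricted system $|4K_{X''}|\big|_{F''}$ does not imply non-birationality of the full $|4K_{F''}|$, so Bombieri cannot be invoked. This is exactly the difficulty the paper's argument is designed to avoid, by reducing everything to the single intersection number $(\sigma''^*(K_{F_0''})\cdot C_F'')$.
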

\begin{proof} {}First of all, we note that all our above arguments remain effective if we replace $\pi$ by any further birational modification over $\pi$.

Since $E$ is birational to $B$, we may take a common smooth projective birational modification $W$ of both $B$ and $E$. Take a birational modification $\pi'\colon X''\rightarrow X'$ such that $f\circ\pi'$ factors through $W\rightarrow B$. Denote by $f''\colon X''\rightarrow W$ the corresponding fibration. The natural $\bP^1$-fibration on $E$ induces a fibration on $W$. Denote by $l_W$ the general fiber of the fibration induced from the ruling. Set $\tilde{\pi}=\pi\circ \pi'$.

Now we work on the higher model $X''$, on which we have the base point free linear system $|M''|=|\pi^*(M)|$ and the general member $S''$ has the property: $S''=\pi'^*(S)=f''^*(H)$ for a certain nef and big divisor $H$ on $W$.  By Lemma \ref{lem:key}, we have $H\ge l_{1,W}+l_{2,W}$ for two general distinct fibers on $W$ (in the same algebraic class as that of $l_W$).  Set $F''=f''^*(l_W)$ and $F_i''=f''^*(l_{i,W})$ for $i=1,2$. Clearly $F''$ induces a pencil on $X''$ and $\tilde{\pi}^*(K_X)\geq S''\geq F_1''+F_2''\equiv 2F''$.

Since $S''|_{F''}$ is moving, we have $p_g(F'')\geq 2$.  On the other hand, the canonical system $|K_{X''}|$ contains a free sub-pencil $|F_1''+F_2''|$ with a generic irreducible element $F''$, which is smooth and projective. By  \cite[Lemma 2.1]{CC3}, we have
\begin{equation}
\tilde{\pi}^*(K_X)|_{F''}\geq \frac{2}{3}\sigma''^*(K_{F_0''})
\end{equation}
where $\sigma'':F''\rw F_0''$ denotes the contraction onto the minimal model.

Denote by $C''$ a general fiber of $f''$. Pick a smooth such element $C_F''\subset F''$. Clearly we have
$$1=(\tilde{\pi}^*(K_X)|_{F''}\cdot C_F'')\geq \frac{2}{3}(\sigma''^*(K_{F_0''})\cdot C_F''),$$
which means that $(\sigma''^*(K_{F_0''})\cdot C_F'')=1$.  Hence $F_0''$ must be a $(1,2)$-surface by Bombieri (see also \cite[Lemma 2.4]{CC3} for a direct reference). We are done.
\end{proof}

Now it is clear that Theorem \ref{main} follows directly from Theorem \ref{d=1}, Theorem \ref{d=20} and Theorem \ref{d212}.  We have finished the proof of our main theorem.

\section{Examples}

It is interesting to know whether a pencil of $(1,2)$-surfaces necessarily appears in those $3$-folds of general type with $p_g\leq 4$ and with non-birational 4-canonical maps.  We provide two more examples here.

\begin{exmp}\label{pg3} Consider the general hypersurface of degree $12$ (canonical 3-fold)  $X=X_{12}\subset \bP(1,1,1,2,6)$.  One knows that $K_X^3=1$, $p_g(X)=3$ and $X$ has 2 orbifold points $\frac{1}{2}(1,-1,1)$.  It is also clear that $\varphi_{4,X}$ is non-birational.
We claim that $X$ does not admit any pencil of $(1,2)$-surfaces.

Assume, to the contrary, that $X$ admits a pencil of $(1,2)$-surfaces, say $\Lambda\subset |F_1|$ where $\dim \Lambda=1$, $F_1$ is irreducible and is of $(1,2)$-type.  We keep the notation in \ref{setup} and modify $\pi$ (for simplicity, still denoted by $\pi$), if necessary, so that $\Mov |\rounddown{\pi^*(F_1)}|$ is base point free.  Denote by $F$ the generic irreducible element of $\Mov |\rounddown{\pi^*(F_1)}|$.  By assumption, $F$ is a $(1,2)$-surface. Since $|K_X|$ is not composed of a pencil and, in fact, $\varphi_{1,X}$ induces a genus $2$ fibration (see \cite{MA}),  we see that the natural map
$$H^0(K_{X'})\lrw H^0(F, K_F)$$
is surjective for a general element $F$. In particular, $K_{X'}\geq F$ and $\pi^*(K_X)|_F\geq \Mov|K_{F}|$.  Recall that we have $\rho(X)=1$ by Dolgacgev \cite[3.2.4]{Dolg}. Then we may write
$K_X\equiv aF_1$ for some rational number $a\geq 1$. 
Since $r_X=2$, we have $2(K_X^2\cdot F_1)\in \mathbb{Z}_{>0}$. Hence $a=1$ or $2$.

{}First, we consider the case $a=1$. We have $K_X\sim F_1$ and $(K_X^2\cdot F_1)=1$.  In fact, we may take such a partial resolution $\hat{\pi}: \hat{X}\lrw X$ that $\hat{\pi}$ is a composition of blow-ups along those centers over $\Bs(\Lambda)$ and that $\Mov(\hat{\pi}^*(\Lambda))$ is free of base points. By assumption, the generic  irreducible element $\hat{F}$ in $\Mov(\hat{\pi}^*(\Lambda))$ is a nonsingular projective surface of $(1,2)$-type. Thus we may write
$$\hat{\pi}^*(F_1)=\hat{F}+E_{\hat{\pi}}',$$
$$K_{\hat{X}}=\hat{\pi}^*(K_X)+E_{\hat{\pi}},$$
where $\text{Supp}(E_{\hat{\pi}}')=\text{Supp}(E_{\hat{\pi}})$ by the construction. 
Noting that $|\hat{F}|$ is a free pencil, we have $(\hat{\pi}^*(K_X)|_{\hat{F}})^2=(K_X^2\cdot F_1)=1$.  The uniqueness of Zariski decomposition implies that $\hat{\pi}^*(K_X)|_{\hat{F}}$ is the positive part of $K_{\hat{F}}$. Thus $(\hat{\pi}^*(K_X)|_{\hat{F}}\cdot E_{\hat{\pi}}|_{\hat{F}})=0$, which also means that 
$$(K_X\cdot F_1^2)=(\hat{\pi}^*(K_X)|_{\hat{F}}\cdot E_{\hat{\pi}}'|_{\hat{F}})=0,$$
a contradiction.

We consider the case $a=2$.  Clearly we have $(K_X^2\cdot F_1)=\frac{1}{2}$.  On the other hand, we have
$\pi^*(K_X)|_S\geq \frac{1}{2}\sigma^*(K_{S_0})$ by \eqref{cri}.
Noting that $S|_F\equiv C\equiv \Mov|K_F|$, we have
$$(\pi^*(K_X)^2\cdot F)\geq (\pi^*(K_X)|_F\cdot S|_F)=(\pi^*(K_X)|_S\cdot F|_S)\geq
\frac{1}{2}(\sigma^*(K_{S_0})\cdot F|_S)\geq 1$$
by \cite[Lemma 2.4]{CC3} since $S$ is not a $(1,2)$-surface. This is also absurd. \end{exmp}

\begin{exmp}\label{pg1} Consider the general complete intersection $X=X_{6,10}\subset \bP(1,2,2,2,3,5)$, which has invariants: $K_X^3=\frac{1}{2}$, $p_g(X)=1$ and has $15$ orbifold points of type $\frac{1}{2}(1,-1,1)$. We claim that $X$ does not admit any pencil of $(1,2)$-surfaces. Assume, to the contrary, that $X$ admits a pencil $|\overline{F}|$ of $(1,2)$-surfaces where $\overline{F}$ is irreducible. 
We aim at deducing a contradiction.

Since $\rho(X)=1$ (see \cite[3.2.4]{Dolg}), we may write $K_X\equiv a\overline{F}$ for some positive rational number $a$. Noting that $(K_X^2\cdot \overline{F})\leq 1$ (since $\overline{F}$ is a $(1,2)$-surface), we have $a\geq \frac{1}{2}$. 

{}First of all, let us fix the notation. Since $P_2(X)=4$ and the bicanonical map $\varphi_{2,X}$ gives a generically finite map, we set $|\overline{S}|=\Mov|2K_X|$. Take a birational modification $\mu: \tilde{X}\lrw X$ such that the following properties hold:
\begin{itemize}
\item[(i)] $\tilde{X}$ is nonsingular and projective;
\item[(ii)] both $\Mov|2K_{\tilde{X}}|$ and $\Mov|\rounddown{\mu^*(\overline{F})}|$ are base point free.
\end{itemize}
Take general members $S\in \Mov|2K_{\tilde{X}}|$ and $F\in \Mov|\rounddown{\mu^*(\overline{F})}|$. We may write
$$\mu^*(\overline{S})\sim_{\bQ} S+E_2,$$
$$\mu^*(\overline{F})\sim_{\bQ} F+E_1,$$
where $E_1$ and $E_2$ are effective $\bQ$-divisors.
By assumption we know that $p_g(S)\geq 3$, that $\Phi_{|S|}$ is generically finite and that $F$ is a nonsingular $(1,2)$-surface.

Since $(K_X^2\cdot \overline{F})=(\pi^*(K_X)^2\cdot F)>0$ and $r_X(K_X^2\cdot \overline{F})\in \bZ$ (by the intersection theory and the fact that $X$ has isolated singularities), we see $(K_X^2\cdot \overline{F})=\frac{1}{2}$ or $1$.  In a word, either $a=\frac{1}{2}$ or $a=1$ is true.

If $a=\frac{1}{2}$, then $\overline{F}\equiv 2K_X$ and $(K_X^2\cdot \overline{F})=1$.
Since $(\mu^*(K_X)|_F)^2=(K_X^2\cdot \overline{F})=1$ and $\mu^*(K_X)|_F\leq K_F$, the uniqueness of Zariski decomposition implies that $\mu^*(K_X)|_F\sim \sigma_0^*(K_{F_0})$ where $\sigma_0:F\rw F_0$ is the contraction onto the minimal model.  The similar argument  to that in Example \ref{pg3} (the case $a=1$) shows that 
$(K_X\cdot F_1^2)=0$, a contradiction. 

If $a=1$, we have
$$2\geq (K_X\cdot \overline{S}^2)\geq (\mu^*(K_X)\cdot S^2)\geq (S|_F)^2\geq 2,$$
which implies $2K_X\equiv \overline{S}$ and $(\mu^*(K_X)\cdot S^2)=2$.  By \cite[Lemma 2.4, Corollary 2.5]{CZ16}, we have $$\mu^*(K_X)|_F\geq \frac{1}{2}\sigma_0^*(K_{F_0}).$$
Hence it follows that
$$1\leq \frac{1}{2}(\sigma_0^*(K_{F_0})\cdot S|_F)\leq (\mu^*(K_X)\cdot F\cdot S)\leq \frac{1}{2}(\mu^*(K_X)\cdot S^2)=1,$$
which implies $(\mu^*(K_X)\cdot F\cdot S)=1$.
Since, by the Hodge index theorem,
$$1=(\mu^*(K_X)|_F\cdot S|_F)\geq \sqrt{\mu^*(K_X)|_F^2\cdot S|_F^2}\geq 1,$$
one has $S|_F\equiv 2\mu^*(K_X)|_F$.  Let $C\sim S|_F$ be a general  curve. Since we have shown that $C^2=2$, $C$ must be hyperelliptic and $C|_C$ gives a $g_2^1$ of $C$.
Now we consider the linear system
$$|K_{X'}+\rounddown{4\mu^*(K_X)}|\lsgeq |\rounddown{5\mu^*(K_X)}|.$$ It is clear that, for a general member $F$ of $|F|$,
$$|K_{X'}+\rounddown{4\mu^*(K_X)}||_F\lsleq |K_F+2C|.$$
Since $|K_F+2C|$ does not give a birational map, neither do $|K_{X'}+\rounddown{4\mu^*(K_X)}||_F$,
which  contradicts  to the fact that $\varphi_{5,X}$ is birational.  The conclusion is that $X$ does not admit any pencil of $(1,2)$-surfaces.
\end{exmp}

It might be interesting to know more such examples. However the difficulty is how to prove the non-existence of a pencil of $(1,2)$-surfaces on a 3-fold.  For the case of $p_g=4$, the reader may refer to \cite{CZ16} for a complete characterization of the birationality of $\varphi_{4}$.
\medskip

\noindent{\bf Acknowledgment}. We would like to thank professor Shigeyuki Kondo and professor Keiji Oguiso for their hospitality during the first author's visit at both Nagoya University and the University of Tokyo in November of 2017.  The first author is a member of LMNS, Fudan University. The second author would like to thank professor JongHae Keum and professor Jun-Muk Hwang for their generous support during  his stay at KIAS. Special thanks  go to Chen Jiang who pointed out a wrong argument  in Example \ref{pg3} and Example \ref{pg1} in the first version of this paper.

\end{document}